\newtheorem{Thm}{Theorem}[section]
\newtheorem{Def}[Thm]{Definition}
\newtheorem{Lem}[Thm]{Lemma}
\newtheorem{Kor}[Thm]{Corollary}
\newtheorem{Rem}[Thm]{Remark}
\title{The generalized Vaserstein symbol revisited}
\author{Tariq Syed\\
Mathematisches Institut\\
Heinrich-Heine-Universit{\"a}t D{\"u}sseldorf\\
Universit{\"a}tsstra{\ss}e 1\\
40225 D{\"u}sseldorf, Germany\\
tariq.syed@gmx.de}
\date{\today} 
\begin{document}

\maketitle

\begin{abstract}
We give a construction of a generalized Vaserstein symbol associated to any finitely generated projective module of rank $2$ over a commutative ring with unit.\\
2020 Mathematics Subject Classification: 13C10, 19A13, 19G38.\\
Keywords: generalized Vaserstein symbol, projective module, cancellation.
\end{abstract}

\tableofcontents

\section{Introduction}

In this paper, we revisit the construction of the generalized Vaserstein symbol given in \cite{Sy1}. The Vaserstein symbol map was originally introduced by Andrei Suslin and Leonid Vaserstein in \cite[\S 5]{SV} as follows: If $n \geq 1$ is an integer and $R$ is a commutative ring with unit, a unimodular row of length $n$ over $R$ is a row vector $(a_{1},...,a_{n})$ of length $n$ with $a_{i} \in R$, $1 \leq i \leq n$, such that $\langle a_{1},...,a_{n} \rangle = R$. Note that the group $GL_{n} (R)$ of invertible $n \times n$-matrices over $R$ (and hence any of its subgroups) acts on the right on the set $Um_n (R)$ of unimodular rows of length $n$ over $R$ by matrix multiplication. Now if $a=(a_{1},a_{2},a_{3}) \in Um_{3} (R)$ is a unimodular row of length $3$ over $R$, then by definition there exists $b = (b_{1},b_{2},b_{3}) \in Um_3 (R)$ such that $a_{1}b_{1}+a_{2}b_{2}+a_{3}b_{3} = 1$. The invertible alternating matrix

\begin{center}
$V (a,b) = \begin{pmatrix}
0 & - a_1 & - a_2 & - a_3 \\
a_1 & 0 & - b_3 & b_2 \\
a_2 & b_3 & 0 & - b_1 \\
a_3 & - b_2 & b_1 & 0
\end{pmatrix}$
\end{center}

has Pfaffian $1$ and hence represents an element of the elementary symplectic Witt group $W_E (R)$ introduced in \cite[\S 3]{SV} which is independent of the choice of the elements $b_{1},b_{2},b_{3} \in R$. It is shown in \cite[\S 5]{SV} that the assignment $a \mapsto V(a,b)$ induces a well-defined map
\begin{center}
$V: Um_{3} (R)/E_{3}(R) \rightarrow W_E (R)$
\end{center}
called the Vaserstein symbol. Here $E_3 (R)$ denotes the subgroup of $GL_3 (R)$ generated by elementary matrices and $Um_{3} (R)/E_{3}(R)$ denotes the orbit space of $Um_3 (R)$ under the right action of $E_{3}(R)$. One also obtains a well-defined map
\begin{center}
$V: Um_{3} (R)/SL_{3}(R) \rightarrow W_{SL} (R)$,
\end{center}
where $W_{SL} (R)$ is an abelian group also defined in \cite[\S 3]{SV}. It is a quotient group of the abelian group $W_E (R)$. Again, $Um_{3} (R)/SL_{3}(R)$ denotes the orbit space of $Um_3 (R)$ under the right action of the group $SL_{3}(R)$.\\
In \cite{Sy1} the author gave a generalized construction of the Vaserstein symbol map: Given a rank $2$ projective $R$-module $P_{0}$ with a fixed trivialization $\theta_{0}: R \xrightarrow{\cong} \det (P_{0})$ of its determinant, he could construct a well-defined map
\begin{center}
$V_{\theta_{0}}: Um (P_{0} \oplus R)/E (P_{0} \oplus R) \rightarrow \tilde{V}(R)$
\end{center}
called the generalized Vaserstein symbol associated to $P_{0}$ and $\theta_{0}$. The abelian group $\tilde{V}(R)$ is a subgroup of the group $V (R)$ considered in \cite[Section 3.B]{Sy1} and is canonically isomorphic to the group $W_E (R)$. Furthermore, $Um (P_{0} \oplus R)$ denotes the set of $R$-linear epimorphisms $P_{0} \oplus R \rightarrow R$ and $E (P_{0} \oplus R)$ is the subgroup of the group $Aut (P_{0} \oplus R)$ of $R$-linear automorphisms of $P_{0} \oplus R$ generated by elementary automorphisms of $P_{0} \oplus R$. As a matter of fact, the set $Um (P_{0} \oplus R)$ can be identified with $Um_{3}(R)$ in case $P_{0} = R^2$ is free, while the groups $Aut (P_{0} \oplus R)$ and $E (P_{0} \oplus R)$ can be identified with the groups $GL_3 (R)$ and $E_3 (R)$ respectively in this case. By choosing an appropriate trivialization $\theta_{0} : R \xrightarrow{\cong} \det (R^2)$, one recovers by means of all these identifications the original Vaserstein symbol as defined by Suslin and Vaserstein in \cite[\S 5]{SV}. We refer the reader to \cite[Section 4.B]{Sy1} for details. It was also proven in \cite[Theorem 3.1]{Sy2} that the generalized Vaserstein symbol descends to a well-defined map
\begin{center}
$V_{\theta_{0}}: Um (P_{0} \oplus R)/SL (P_{0} \oplus R) \rightarrow \tilde{V}_{SL}(R)$.
\end{center}
Here $SL (P_{0} \oplus R)$ denotes the group of $R$-linear automorphisms of $P_{0} \oplus R$ with determinant $1$ and the abelian group $\tilde{V}_{SL}(R)$ is a subgroup of the abelian group $V_{SL} (R)$ considered in \cite[Section 2.C]{Sy2} and is canonically isomorphic to the group $W_{SL}(R)$; in fact, $\tilde{V}_{SL}(R)$ is a quotient of $\tilde{V} (R)$. The orbit spaces $Um (P_{0} \oplus R)/Aut (P_{0} \oplus R)$, $Um (P_{0} \oplus R)/SL (P_{0} \oplus R)$ and $Um (P_{0} \oplus R)/E (P_{0} \oplus R)$ play a central role in the study of the cancellation property of the projective $R$-module $P_{0}$; we refer the reader to \cite[Section 2.D]{Sy1} for a detailed explanation of this viewpoint.\\
In this paper, we extend the construction of the generalized Vaserstein symbol to all rank $2$ projective $R$-modules: First of all, we define abelian groups $V (R, L)$ and $V_{SL} (R, L)$ for any rank $1$ projective $R$-module $L$; when $R = L$, these groups can be identified with the group $V (R)$ considered in \cite[Section 3.B]{Sy1} and with the group $V_{SL} (R)$ considered in \cite[Section 2.C]{Sy2} respectively. We also define a quotient $V_{GL}(R,L)$ of $V_{SL}(R,L)$ which is isomorphic to $\tilde{V}_{SL}(R)$ when $R = L$. Then we construct a well-defined map
\begin{center}
$V: Um (P_{0} \oplus R)/E (P_{0} \oplus R) \rightarrow V(R, \det (P_{0}))$,
\end{center}
which we call the generalized Vaserstein symbol of $P_{0}$ (cf. Theorems \ref{T3.2} and \ref{Maps}). We also show in Theorem \ref{Maps} that this map descends to a well-defined map
\begin{center}
$V: Um (P_{0} \oplus R)/SL (P_{0} \oplus R) \rightarrow V_{SL}(R, \det (P_{0}))$
\end{center}
and therefore also to a well-defined map
\begin{center}
$V: Um (P_{0} \oplus R)/SL (P_{0} \oplus R) \rightarrow V_{GL}(R, \det (P_{0}))$.
\end{center}
If the rank $2$ projective $R$-module $P_{0}$ admits a trivialization $\theta_{0} : R \xrightarrow{\cong} \det (P_{0})$ of its determinant, we recover the definition of the generalized Vaserstein symbol given in \cite{Sy1} by means of the identification $V (R) \cong V (R, \det (P_{0}))$ induced by $\theta_{0}$; similarly, we recover the definition of the induced maps $V_{\theta_{0}}: Um (P_{0} \oplus R)/SL (P_{0} \oplus R) \rightarrow \tilde{V}_{SL}(R) \subset {V}_{SL}(R)$ by means of the identifications $V_{SL} (R) \cong V_{SL} (R, \det (P_{0}))$ or $\tilde{V}_{SL} (R) \cong V_{GL} (R, \det (P_{0}))$ induced by $\theta_{0}$.\\
The original Vaserstein symbol as defined in \cite{SV} highlights a remarkable connection between stably free modules, Hermitian $K$-theory and homotopy theory as it induces the explicit KO-degree map $\Psi_3$ in Hermitian $K$-theory considered in \cite{AF} which stabilizes to the unit map from the unit sphere spectrum to the Hermitian $K$-theory spectrum in motivic homotopy theory; as a matter of fact, the original Vaserstein symbol was substantially used in the proof of breakthrough results on stably free modules (cf. \cite{F}, \cite{FRS}). Other cancellation results on projective modules were obtained in \cite{Sy3} by using the generalized Vaserstein symbol defined in \cite{Sy1}. Finally, the generalized Vaserstein symbol as defined in \cite{Sy1} has led to a conceptual understanding of the cancellation property of rank $2$ projective modules with a trivial determinant over Noetherian rings of dimension $\leq 4$ (cf. \cite{Sy2}); this was considered a particularly difficult and subtle problem as examples of smooth affine algebras of dimension $4$ over algebraically closed fields which admit non-trivial stably trivial modules of rank $2$ were constructed in \cite{NMK}. It should be expected that further analysis of the generalized Vaserstein symbol defined in this paper will lead to an analogous understanding of the cancellation property of rank $2$ projective modules (whose determinant is not necessarily trivial) over Noetherian rings of dimension $\leq 4$.\\
The paper is organized as follows: In Section \ref{2} we introduce the abelian groups $V (R,L)$, $V_{SL}(R,L)$ and $V_{GL}(R,L)$ of a commutative ring $R$ and a rank $1$ projective $R$-module $L$. In Section \ref{3} we give the construction of the generalized Vaserstein symbol of a finitely generated projective $R$-module $P_{0}$ of rank $2$ and prove that the generalized Vaserstein symbol as well as all the induced maps are well-defined.

\subsection*{Acknowledgements}

The author would like to thank the anonymous referee for suggesting changes which greatly improved the exposition of the paper. Furthermore, the author would like to thank his doctoral advisors Jean Fasel and Andreas Rosenschon once again for all their support during his doctorate on the generalized Vaserstein symbol a few years ago. The author was funded by the Deutsche Forschungsgemeinschaft (DFG, German Research Foundation) - Project number 544731044.

\section{Preliminaries}\label{Preliminaries}\label{2}

Throughout this paper, $R$ will denote a commutative ring with unit. Furthermore, let $P$ be a finitely generated projective $R$-module.\\
We denote by $Um (P)$ the set of $R$-linear epimorphisms $P \rightarrow R$. Furthermore, we denote by $Aut (P)$ the group of $R$-linear automorphisms of $P$ and we let $SL(P)$ denote its subgroup of $R$-linear automorphisms with determinant $1$. Note that $Aut (P)$ (and hence any of its subgroups) acts on the right on $Um (P)$ by precomposition.\\
Following \cite[Section 2.B]{Sy1}, we define elementary automorphisms of a direct sum $P = \bigoplus_{i=1}^{n} P_i$. An $R$-linear homomorphism $s_{ij}: P_j \rightarrow P_i$ for some $i \neq j$ is always understood to be extended to an endomorphism of $P$ by defining it to be $0$ on the other direct summands. We call automorphisms of $P$ of the form $\varphi_{s_{ij}} = id_{P} + s_{ij}$, where $s_{ij}$ is an $R$-linear homomorphism $s_{ij}: P_j \rightarrow P_i$ for some $i \neq j$, elementary automorphisms with respect to the given decomposition into a direct sum of the module $P$; if the decomposition of $P$ into a direct sum is understood, we just call them elementary automorphisms. The subgroup of $Aut (P)$ generated by elementary automorphisms will be denoted by $E(P_{1},...,P_{n})$ or simply by $E (P)$ if the decomposition of  $P$ into a direct sum is understood.\\
For any integer $n \geq 1$, we denote by $Um_n (R)$ the set of unimodular rows of length $n$ over $R$, i.e., the set of row vectors $(a_{1},...,a_{n})$ of length $n$ with $a_{i} \in R$, $1 \leq i \leq n$, such that $\langle a_{1},...,a_{n} \rangle = R$; if $P = R^n$ for some integer $n \geq 1$, we naturally identify the sets $Um_{n}(R)$ and $Um (R^n)$. In this case we also identify $Aut (P)$ with the group  $GL_n (R)$ of invertible $n \times n$-matrices over $R$ and $SL (P)$ with the group $SL_n (R)$ of invertible $n \times n$-matrices over $R$ with determinant $1$; it follows from \cite[Lemma 2.7(a)]{SV} that the group $E (P \oplus R)$ can be identifed with the group $E_{n+1}(R)$ generated as a subgroup of $GL_{n+1}(R)$ by elementary matrices. Note that if $P = R^n$ the action of $Aut (P)$ on $Um (P)$ given by precomposition then corresponds to the action of $GL_n (R)$ on $Um_n (R)$ given by matrix multiplication.\\
For any rank $1$ projective $R$-module $L$, we set $P^{\vee_{L}} = Hom_{R-\mathfrak{mod}} (P, L)$. One has a natural isomorphism

\begin{center}
$can_{L}: P \rightarrow P^{\vee_{L} \vee_{L}}, p \mapsto (ev_{p}:P^{\vee_{L}} \rightarrow L, a \mapsto a (p))$,
\end{center}

induced by evaluation.
\begin{Def}
An $L$-oriented alternating homomorphism on $P$ is an $R$-linear homomorphism $f: P \rightarrow P^{\vee_{L}}$ such that $f(p)(p) = 0$ for all $p \in P$. An $L$-oriented alternating isomorphism on $P$ is an $L$-oriented alternating homomorphism on $P$ which is an isomorphism.
\end{Def}
Note that if $L = R$, then $L$-oriented alternating homomorphisms just correspond to alternating $R$-bilinear forms $P \times P \rightarrow R$ and $L$-oriented alternating isomorphisms to non-degenerate alternating $R$-bilinear forms $P \times P \rightarrow R$ (cf. \cite[Section 2.A]{Sy1}). For any finitely generated projective $R$-module $P$, there is an $L$-oriented alternating isomorphism
\begin{center}
$H_{L} (P): P \oplus P^{\vee_{L}} \rightarrow P^{\vee_{L}} \oplus P^{\vee_{L} \vee_{L}}$
\end{center}
given by
\begin{center}
$
\begin{pmatrix}
0 & id \\
- can_{L} & 0
\end{pmatrix}
$,
\end{center}

which we call the hyperbolic $L$-oriented alternating isomorphism.\\
We say that two $L$-oriented alternating homomorphisms $f: P \rightarrow P^{\vee_{L}}$ and $g: Q \rightarrow Q^{\vee_{L}}$ on finitely generated projective $R$-modules $P$ and $Q$ are isometric if there is an $R$-linear isomorphism $\varphi: P \rightarrow Q$ such that $\varphi^{\vee_{L}} g \varphi = f$.\\
We now consider triples $(P, g, f)$, where $P$ is a finitely generated projective $R$-module and $g$ and $f$ are $L$-oriented alternating isomorphisms on $P$. We say that two such triples $(P,g,f)$ and $(P',g',f')$ are isometric if there is an $R$-linear isomorphism $\varphi: P \rightarrow P'$ such that $\varphi^{\vee_{L}} f' \varphi = f$ and $\varphi^{\vee_{L}} g' \varphi = g$. We denote the isometry class of a triple $(P,g,f)$ simply by $[P,g,f]$.
\begin{Def}
The abelian group $V (R, L)$ is defined as the quotient of the free abelian group on the set of isometry classes of triples $(P,g,f)$ as above by the subgroup generated by the relations
\begin{center}
\begin{itemize}
\item $[P \oplus P', g \perp g', f \perp f'] = [P,g,f] + [P',g',f']$ for any $L$-oriented alternating isomorphisms $f,g$ on $P$ and $f',g'$ on $P'$;
\item $[P, h, f] = [P, g, f] + [P, h, g]$ for any $L$-oriented alternating isomorphisms $f,g,h$ on $P$.
\end{itemize}
\end{center}
\end{Def}

If $R = L$, we recover the definition of the abelian group $V(R)$ discussed in \cite[Section 3.B]{Sy1}, i.e., $V (R) = V (R, R)$. Note that the defining relations of the group $V (R, L)$ above immediately imply that also the following relations hold in the group $V (R, L)$:
 
\begin{itemize}
\item $[P, f, f] = 0$ in $V (R, L)$ for any $L$-oriented alternating isomorphism $f$ on $P$,
\item $[P, g, f] = - [P, f, g]$ in $V (R, L)$ for any $L$-oriented alternating isomorphisms $f,g$ on $P$,
\item $[P, g, {\beta}^{\vee_{L}} {\alpha}^{\vee_{L}} f \alpha \beta] = [P, f, {\alpha}^{\vee_{L}} f \alpha] + [P, g, {\beta}^{\vee_{L}} f \beta]$ in $V (R, L)$ for all $\alpha, \beta \in Aut (P)$ and $L$-oriented alternating isomorphisms $f,g$ on $P$.
\end{itemize}

Similarly, we define the abelian group $V_{SL} (R, L)$:

\begin{Def}
The abelian group $V_{SL} (R, L)$ is defined as the quotient of the free abelian group on the set of isometry classes of triples $(P,g,f)$ as above by the subgroup generated by the relations
\begin{center}
\begin{itemize}
\item $[P \oplus P', g \perp g', f \perp f'] = [P,g,f] + [P',g',f']$ for any $L$-oriented alternating isomorphisms $f,g$ on $P$ and $f',g'$ on $P'$;
\item $[P, h, f] = [P, g, f] + [P, h, g]$ for any $L$-oriented alternating isomorphisms $f,g,h$ on $P$;
\item $[P, g, f] = [P, g, {\varphi}^{\vee_{L}} f \varphi]$ for any $L$-oriented alternating isomorphisms $f,g$ on $P$ and $\varphi \in SL (P)$.
\end{itemize}
\end{center}
\end{Def}

Clearly, $V_{SL} (R, L)$ is a quotient of $V (R, L)$. If $R = L$, we recover the definition of the abelian group $V_{SL} (R)$ discussed in \cite[2.C]{Sy2}, i.e., $V_{SL} (R) = V_{SL} (R, R)$. Finally, we define the abelian group $V_{GL} (R, L)$:

\begin{Def}
The abelian group $V_{GL} (R, L)$ is defined as the quotient of the free abelian group on the set of isometry classes of triples $(P,g,f)$ as above by the subgroup generated by the relations
\begin{center}
\begin{itemize}
\item $[P \oplus P', g \perp g', f \perp f'] = [P,g,f] + [P',g',f']$ for any $L$-oriented alternating isomorphisms $f,g$ on $P$ and $f',g'$ on $P'$;
\item $[P, h, f] = [P, g, f] + [P, h, g]$ for any $L$-oriented alternating isomorphisms $f,g,h$ on $P$;
\item $[P, g, f] = [P, g, {\varphi}^{\vee_{L}} f \varphi]$ for any $L$-oriented alternating isomorphisms $f,g$ on $P$ and $\varphi \in Aut (P)$.
\end{itemize}
\end{center}
\end{Def}

Again, $V_{GL} (R, L)$ is clearly a quotient of $V (R, L)$ and as well of $V_{SL} (R, L)$. If $R = L$, then $V_{GL}(R,L)$ can be identified with the group $\tilde{V}_{SL}(R)$ considered in \cite[Section 2.C]{Sy2}: Indeed, the canonical isomorphism $V(R,R) = V (R) \cong W'_E (R)$ from \cite[Section 3.B]{Sy1} descends to a canonical isomorphism between $V_{GL}(R,R)$ and $coker (K_{1}(R) \xrightarrow{H} W'_E (R))$, where $H$ denotes the hyperbolic map considered in \cite[Lemma 2.4]{F}; it is easy to see that $coker (K_{1}(R) \xrightarrow{H} W'_E (R)) \cong coker (SK_{1}(R) \xrightarrow{\tilde{H}} W_E (R)) = W_{SL} (R)$, where $\tilde{H}$ denotes the map induced by $H$ (cf. \cite[Proposition 2.7]{F}). Since it was shown in \cite[Section 2.C]{Sy2} that $W_{SL}(R) \cong \tilde{V}_{SL}(R)$, it follows that $V_{GL}(R, R)$ and $\tilde{V}_{SL}(R)$ are indeed canonically isomorphic. We now prove the following useful lemma:

\begin{Lem}\label{L2.1}
Let $P = \bigoplus_{i=1}^{n} P_{i}$ be a finitely generated projective $R$-module and $f$ be an $L$-oriented alternating isomorphism on $P$. Then $[P, f, {\varphi}^{\vee} f {\varphi}] = 0$ in $V (R, L)$ for any element ${\varphi} \in E (P)$ with respect to the given decomposition. In particular, if $g$ is another $L$-oriented alternating isomorphism on $P$, then $[P,g,f]=[P,g,{\varphi}^{\vee_{L}} f \varphi] \in V (R, L)$ for any element ${\varphi} \in E (P)$ with respect to the given decomposition.
\end{Lem}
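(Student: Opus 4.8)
The plan is to exploit the third derived relation listed just after the definition of $V(R,L)$ in order to show that, for a fixed $L$-oriented alternating isomorphism $f$ on $P$, the assignment
\[
\Phi_f \colon Aut(P) \to V(R,L), \qquad \varphi \mapsto [P, f, \varphi^{\vee_L} f \varphi]
\]
is a group homomorphism; then to reduce the claim to the elementary generators of $E(P)$; and finally to kill each generator by realizing it as a commutator after passing to a hyperbolic enlargement of $P$. First I would establish the homomorphism property: setting $g=f$ in the third derived relation and using $(\alpha\beta)^{\vee_L} = \beta^{\vee_L}\alpha^{\vee_L}$ gives at once $\Phi_f(\alpha\beta) = \Phi_f(\alpha) + \Phi_f(\beta)$ for all $\alpha,\beta \in Aut(P)$, and since $\Phi_f(id) = [P,f,f] = 0$ the map $\Phi_f$ is indeed a homomorphism into the \emph{abelian} group $V(R,L)$. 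In particular $\Phi_f$ annihilates every commutator. As $E(P)$ is generated by the elementary automorphisms $\varphi_{s_{ij}} = id_P + s_{ij}$, it suffices to prove $\Phi_f(\varphi_{s_{ij}}) = 0$ for each elementary generator.

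Next I would record the behaviour of $\Phi$ under orthogonal sums. If $f_0$ is any $L$-oriented alternating isomorphism on a module $Q$, then, because $(\varphi\oplus\psi)^{\vee_L}(f\perp f_0)(\varphi\oplus\psi) = (\varphi^{\vee_L}f\varphi)\perp(\psi^{\vee_L}f_0\psi)$, the first defining relation of $V(R,L)$ yields $\Phi_{f\perp f_0}(\varphi\oplus\psi) = \Phi_f(\varphi) + \Phi_{f_0}(\psi)$, and in particular $\Phi_{f\perp f_0}(\varphi\oplus id_Q) = \Phi_f(\varphi)$. The key step is then to choose $Q$ so that the generator becomes a commutator. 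I would take $Q = P_j \oplus P_j^{\vee_L}$ equipped with the hyperbolic isomorphism $H_L(P_j)$, which always exists, and regard $M = P \oplus Q$ as decomposed into $P_1,\dots,P_n$ together with a fresh summand $P_{n+1} := P_j$ (and $P_{n+2} := P_j^{\vee_L}$). Writing $u\colon P_{n+1}\to P_i$ for $s_{ij}$ transported along the identity $P_{n+1}=P_j$ and $v\colon P_j \to P_{n+1}$ for that same identity, the indices $i,j,n+1$ are pairwise distinct, so $u^2=v^2=vu=0$ and a direct nilpotent computation shows that the commutator of the elementary automorphisms $\varphi_u,\varphi_v \in Aut(M)$ equals $\varphi_{s_{ij}}\oplus id_Q$. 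Hence
\[
\Phi_f(\varphi_{s_{ij}}) = \Phi_{f\perp H_L(P_j)}(\varphi_{s_{ij}}\oplus id_Q) = \Phi_{f\perp H_L(P_j)}\bigl([\varphi_u,\varphi_v]\bigr) = 0,
\]
the last equality because $\Phi_{f\perp H_L(P_j)}$ is a homomorphism into an abelian group. This proves $[P,f,\varphi^{\vee_L}f\varphi]=0$ for every $\varphi\in E(P)$. The final assertion then drops out of the second defining relation applied with the three forms $g$, $f$ and $\varphi^{\vee_L}f\varphi$: it gives $[P,g,\varphi^{\vee_L}f\varphi] = [P,g,f] + [P,f,\varphi^{\vee_L}f\varphi] = [P,g,f]$.

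I expect the main obstacle to be this third step. For the intended application one has only $n=2$ summands, so no intermediate index is available inside $P$ itself to write $\varphi_{s_{ij}}$ as a Steinberg commutator, and one is forced to enlarge $P$ first. The enlargement must stay within the class of modules carrying an $L$-oriented alternating isomorphism, which is precisely why the hyperbolic summand $P_j\oplus P_j^{\vee_L}$ is used rather than $P_j$ alone (the latter need not admit such a form); the one genuinely computational point is verifying that the commutator $[\varphi_u,\varphi_v]$ is exactly $\varphi_{s_{ij}}\oplus id_Q$ with no correction terms, which follows from the vanishing of all the relevant composites of $u$ and $v$.
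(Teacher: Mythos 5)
Your proposal is correct and takes essentially the same route as the paper's proof: both first observe via the third derived relation that $\varphi \mapsto [P,f,\varphi^{\vee_L} f \varphi]$ is a homomorphism from $Aut(P)$ to the abelian group $V(R,L)$ (hence kills commutators and reduces the claim to elementary generators), then stabilize by adding a trivial summand $[Q,f_0,f_0]=0$ so that the generator $\varphi_{s_{ij}}$ becomes a commutator, and finally deduce the second assertion from the cocycle relation. The only difference is that where the paper invokes \cite[Corollary 2.5]{Sy1} for the commutator realization after stabilization, you prove that step directly, choosing the hyperbolic enlargement $Q = P_j \oplus P_j^{\vee_L}$ with $H_L(P_j)$ and verifying the identity $[\varphi_u,\varphi_v] = \varphi_{s_{ij}} \oplus id_Q$ by the nilpotence computation $u^2=v^2=vu=0$, $uv=s_{ij}$ --- a correct, self-contained substitute for the citation.
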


\begin{proof}
First of all, note that in the abelian group $V (R, L)$ the equality
 
\begin{center}
$[P, f, {\varphi}_{2}^{\vee_{L}} {\varphi}_{1}^{\vee_{L}} f {\varphi}_{1} {\varphi}_{2}] = [P, f, {\varphi}_{1}^{\vee_{L}} f {\varphi}_{1}] + [P, f, {\varphi}_{2}^{\vee_{L}} f {\varphi}_{2}]$
\end{center}

holds for any $\varphi_{1}, \varphi_{2} \in Aut (P)$. As a direct consequence, we only have to prove the lemma for elementary automorphisms.\\
So let $\varphi_{s}$ be the elementary automorphism induced by $s: P_{j} \rightarrow P_{i}$ for some $i \neq j$. Since we can always add the summand $[P, f, f] = 0$, we may assume that we are in the situation of \cite[Corollary 2.5]{Sy1}. Therefore we may assume that $\varphi_{s}$ is a commutator in $Aut (P)$. So write $\varphi_{s} =  {\varphi}_{1} {\varphi}_{2} {\varphi}_{1}^{-1} {\varphi}_{2}^{-1}$ for some $\varphi_{1}, \varphi_{2} \in Aut (P)$. Then we have 
\begin{center}
$[P, f, {\varphi}^{\vee_{L}}_{s} f {\varphi_{s}}] = [P, f, {\varphi}_{1}^{\vee_{L}} f {\varphi}_{1}] + [P, f, {\varphi}_{2}^{\vee_{L}} f {\varphi}_{2}] + [P, f, {({\varphi}_{1}^{-1})}^{\vee_{L}} f {\varphi}_{1}^{-1}]+ [P, f, {({\varphi}_{2}^{-1})}^{\vee_{L}} f {\varphi}_{2}^{-1}]$.
\end{center}

But since
\begin{center}
$[P, f, {\varphi}_{i}^{\vee_{L}} f {\varphi}_{i}] + [P, f, ({{\varphi}_{i}^{-1})}^{\vee_{L}} f {\varphi}_{i}^{-1}] = [P, f, \varphi_{i}^{\vee_{L}}({{\varphi}_{i}^{-1})}^{\vee_{L}} f {\varphi}_{i}^{-1}\varphi_{i}] = [P,f,f] = 0$
\end{center}
for $i=1,2$, it follows that indeed $[P, f, {\varphi}^{\vee_{L}}_{s} f {\varphi}_{s}]=0$. This proves the first statement.\\
The second statement easily follows from the first statement by using the defining relations in the group $V (R, L)$: Indeed, we have
\begin{center}
$[P,g,{\varphi}^{\vee_{L}} f \varphi] = [P,f,{\varphi}^{\vee_{L}} f \varphi] + [P,g,f] = [P,g,f]$
\end{center}
in $V (R, L)$. This finishes the proof.
\end{proof}

\begin{Rem}
Let $R$ be a smooth affine algebra over a perfect field $k$ with $char(k) \neq 2$. Then the group $W'_E (R)$ and hence $V (R) = V (R,R)$ can be identified with the higher Grothendieck-Witt group $GW_{1}^{3}(R)$ (cf. \cite[Section 2.3.2]{AF}); moreover, the group $V_{SL} (R)=V_{SL} (R,R)$ can be identified with the cokernel of the restricted hyperbolic map ${H_{1,3}}|_{SK_{1}(R)}: SK_{1}(R) \rightarrow GW_{1}^{3}(R)$ and the group $V_{GL} (R) \cong \tilde{V}_{SL} (R)$ can be identified with the cokernel of the hyperbolic map $H_{1,3}: K_{1}(R) \rightarrow GW_{1}^{3}(R)$. Now recall that one can also define higher Grothendieck-Witt groups $GW_{1}^{3}(R, L)$ for any rank $1$ projective $R$-module $L$ (cf. \cite[Definition 2.1.1]{AF}). It should be expected that the groups $V (R, L)$,  $V_{SL} (R, L)$ and $V_{GL} (R, L)$ can be identified with the higher Grothendieck-Witt group $GW_{1}^{3}(R, L)$, with the cokernel of the restricted hyperbolic map ${H_{1,3}}|_{SK_{1}(R)}: SK_{1}(R) \rightarrow GW_{1}^{3}(R, L)$ and with the cokernel of the hyperbolic map $H_{1,3}: K_{1}(R) \rightarrow GW_{1}^{3}(R, L)$ respectively.
\end{Rem}

\section{Results}\label{Results}\label{3}

Let $P_0$ be a finitely generated projective $R$-module of rank $2$ and $L = \det (P_{0})$ be its determinant. We denote by $\chi_0$ the $L$-oriented alternating isomorphism on $P_0$ given by $\chi_0 : P_0 \rightarrow P_0^{\vee_{L}}, q \mapsto (P_{0} \rightarrow L, p \mapsto p \wedge q)$.\\
As usual, let $Um (P_0 \oplus R)$ be the set of $R$-linear epimorphisms $P_0 \oplus R \rightarrow R$. Any element $a$ of $Um (P_0 \oplus R)$ induces an exact sequence of the form

\begin{center}
$0 \rightarrow P(a) \rightarrow P_0 \oplus R \xrightarrow{a} R \rightarrow 0$,
\end{center}

\noindent where we let $P(a) = \ker (a)$. The choice of a section $s: R \rightarrow P_0 \oplus R$ of $a$ then determines a retraction $r: P_0 \oplus R \rightarrow P(a)$ given by $r(p)= p - s a(p)$ and also an isomorphism $i_{s}: P_0 \oplus R \rightarrow P(a) \oplus R, p \mapsto a(p) + r(p)$. Then the isomorphism $i_{s}: P_{0} \oplus R \xrightarrow{\cong} P(a) \oplus R$ induces an isomorphism $\theta: \det(P_0) \xrightarrow{\cong} \det (P(a))$. Finally, we denote by $\chi_a$ the $L$-oriented alternating isomorphism on $P(a)$ given by $P(a) \rightarrow {P(a)}^{\vee_{L}}, q \mapsto (P(a) \rightarrow L, p \mapsto \theta^{-1} (p \wedge q))$.\\
We now want to define the generalized Vaserstein symbol
 
\begin{center}
$V: Um (P_0 \oplus R) \rightarrow V (R, L)$
\end{center}
 
associated to $P_{0}$ by

\begin{center}
$V (a) = [P_{0} \oplus R \oplus R^{\vee_{L}}, \chi_{0} \perp H_{L} (R), {(i_s \oplus 1)}^{\vee_{L}} (\chi_{a} \perp H_{L} (R)) {(i_s \oplus 1)}]$.
\end{center}

\begin{Rem}\label{R1}
Note that if there is an isomorphism $\theta_{0}: R \xrightarrow{\cong} \det (P_{0})$, this isomorphism induces a group isomorphism $V (R) \cong V (R, L)$ as $L$-oriented alternating isomorphisms will then correspond to non-degenerate alternating forms. As a matter of fact, the $L$-oriented alternating isomorphisms $\chi_{0} \perp H_{L} (R)$ and ${(i_s \oplus 1)}^{\vee_{L}} (\chi_{a} \perp H_{L} (R)) {(i_s \oplus 1)}$ considered above can then be identified with the non-degenerate alternating forms $\chi_{0} \perp \psi_{2}$ and ${(i_s \oplus 1)}^{t} (\chi_{a} \perp \psi_{2}) {(i_s \oplus 1)}$ considered in the definition of the generalized Vaserstein symbol given in \cite[Section 4.B]{Sy1}.
\end{Rem}

In order to prove that this generalized Vaserstein symbol is well-defined, we show that our definition is independent of the choice of a section of $a$:

\begin{Thm}\label{T3.2}
The generalized Vaserstein symbol is well-defined, i.e., the element $V (a)$ of $V (R, L)$ defined as above is independent of the choice of a section of $a$.
\end{Thm}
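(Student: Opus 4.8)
The plan is to fix $a$ and compare the two classes $V_{s}(a)$ and $V_{s'}(a)$ obtained from two sections $s,s'$ of $a$. First I would observe that since $as=as'=id_{R}$, the difference $s'-s$ annihilates nothing outside $\ker(a)$, i.e. it maps $R$ into $P(a)=\ker(a)$; hence there is an $R$-linear homomorphism $m:R\to P(a)$ (composed with the inclusion $P(a)\hookrightarrow P_{0}\oplus R$) with $s'=s+m$. Unwinding the definitions of the retraction $r'(p)=p-s'a(p)=r(p)-m(a(p))$ and of $i_{s'}(p)=a(p)+r'(p)$, a direct computation shows that $i_{s'}=\varphi\circ i_{s}$, where $\varphi\in Aut(P(a)\oplus R)$ is the elementary automorphism $id+s_{12}$ attached to $-m:R\to P(a)$ (second summand to first). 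This bookkeeping is the only genuine computation and should be routine once the convention for $i_{s}$ is made explicit.

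The decisive structural point is then that $\varphi$ is elementary, so $\det(\varphi)=1$ and therefore $i_{s'}$ and $i_{s}$ induce the \emph{same} isomorphism $\theta:\det(P_{0})\to\det(P(a))$ on determinants. Consequently the alternating isomorphism $\chi_{a}$, which depends on the section only through $\theta$, is unchanged. This means the two triples defining $V_{s}(a)$ and $V_{s'}(a)$ live on the same module $P':=P_{0}\oplus R\oplus R^{\vee_{L}}$, share the same first form $g_{0}:=\chi_{0}\perp H_{L}(R)$, and are built from the same datum $f:=\chi_{a}\perp H_{L}(R)$ on $Q:=P(a)\oplus R\oplus R^{\vee_{L}}$; they differ only in the isomorphism transporting $f$ back to $P'$. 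Setting $\psi:=\varphi\oplus id_{R^{\vee_{L}}}$, which is again elementary with respect to the decomposition $P(a)\oplus R\oplus R^{\vee_{L}}$, we have $i_{s'}\oplus 1=\psi\circ(i_{s}\oplus 1)$.

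Writing $h:=(i_{s}\oplus 1)^{\vee_{L}}f(i_{s}\oplus 1)$ and $h':=(i_{s'}\oplus 1)^{\vee_{L}}f(i_{s'}\oplus 1)=(i_{s}\oplus 1)^{\vee_{L}}\psi^{\vee_{L}}f\psi(i_{s}\oplus 1)$, I would next apply the defining relations of $V(R,L)$ to obtain $[P',g_{0},h']-[P',g_{0},h]=[P',h,h']$. Since the isomorphism $i_{s}\oplus 1:P'\to Q$ is an isometry carrying $h$ to $f$ and $h'$ to $\psi^{\vee_{L}}f\psi$, isometry invariance gives $[P',h,h']=[Q,f,\psi^{\vee_{L}}f\psi]$. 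Finally, Lemma \ref{L2.1} applied to the elementary automorphism $\psi\in E(Q)$ yields $[Q,f,\psi^{\vee_{L}}f\psi]=0$, whence $V_{s'}(a)=V_{s}(a)$.

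The main obstacle is concentrated in the second paragraph: one must verify that changing the section modifies $i_{s}$ only by an elementary (in particular determinant-one) automorphism, so that $\theta$ and hence $\chi_{a}$ are genuinely independent of the section. If this failed, the two first forms $\chi_{a}$ would differ and the clean reduction to Lemma \ref{L2.1} would collapse. Everything after that is a formal manipulation of the relations defining $V(R,L)$ together with the isometry invariance built into the group.
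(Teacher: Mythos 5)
Your proof is correct, and it takes a genuinely different route from the paper's. The paper works entirely on the source module $P_{0} \oplus R \oplus R^{\vee_{L}}$: it constructs an elementary automorphism $\varphi_{L} = id - d_{L}$, where $d_{L}$ is obtained from the map $p \mapsto s(1) \wedge t(1) \wedge p$ via the identification $\det (P_{0} \oplus R) \cong R^{\vee_{L}}$, and then verifies the isometry identity
\begin{equation*}
{\varphi}_{L}^{\vee_{L}} {(i_s \oplus 1)}^{\vee_{L}} (\chi_{a} \perp H_{L} (R)) {(i_s \oplus 1)} {\varphi}_{L} = {(i_t \oplus 1)}^{\vee_{L}} (\chi_{a} \perp H_{L} (R)) {(i_t \oplus 1)}
\end{equation*}
by localizing at every prime of $R$ and invoking the explicit computation from the second step of the proof of \cite[Theorem 4.1]{Sy1}, before concluding with Lemma \ref{L2.1}. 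You instead factor the change of section on the target side: writing $s' = s + m$ with $m: R \rightarrow P(a)$, you obtain $i_{s'} = \varphi \, i_{s}$ with $\varphi$ elementary on $P(a) \oplus R$, stabilize to $\psi = \varphi \oplus 1 \in E(P(a) \oplus R \oplus R^{\vee_{L}})$, transport the comparison class $[P', h, h']$ through the isometry $i_{s} \oplus 1$ to $[P(a) \oplus R \oplus R^{\vee_{L}}, f, \psi^{\vee_{L}} f \psi]$, and kill the latter with Lemma \ref{L2.1}. All steps check out: the factorization $i_{s'} = \varphi\, i_{s}$ follows from the one-line identity $r'(p) = r(p) - m(a(p))$; the section-independence of $\theta$ and hence of $\chi_{a}$ (which the paper asserts by the same observation that $s'-s$ lands in $P(a)$) becomes transparent in your setup because elementary automorphisms act trivially on determinants; and the manipulation $[P', g_{0}, h'] = [P', g_{0}, h] + [P', h, h']$ together with isometry invariance is exactly what the defining relations of $V(R,L)$ permit. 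What your route buys is self-containedness: no localization and no appeal to the computation of \cite{Sy1} are needed, since the elementary automorphism arises tautologically from the two sections rather than from a wedge-product construction whose isometry property must be checked. What the paper's route buys is that the entire comparison stays on the fixed module $P_{0} \oplus R \oplus R^{\vee_{L}}$ with the fixed reference form $\chi_{0} \perp H_{L}(R)$, directly mirroring and reusing the original argument of \cite{Sy1}, which makes the compatibility statement of Remark \ref{R2} immediate.
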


\begin{proof}
Our proof is a generalization of the proof of \cite[Theorem 4.1]{Sy1}. We let $a: P_{0} \oplus R \rightarrow R$ be an $R$-linear epimorphism and $s, t: R \rightarrow P_0 \oplus R$ two chosen sections of $a$. As in the definition above, $i_s$ and $i_t$ denote the isomorphisms $P_0 \oplus R \cong P (a) \oplus R$ induced by the two sections $s$ and $t$ respectively. Note that the isomorphism $\det (P(a)) \cong \det (P_0)$ does not depend on the choice of a section because the difference of two sections maps $R$ into $P(a)$; as a direct consequence, the form $\chi_{a}$ is independent of the choice of a section as well.\\
The theorem is therefore proven as soon as we show that the elements 
\begin{center}
$V_{a,s} = [P_{0} \oplus R \oplus R^{\vee_{L}}, \chi_{0} \perp H_{L} (R), {(i_s \oplus 1)}^{\vee_{L}} (\chi_{a} \perp H_{L} (R)) {(i_s \oplus 1)}] \in V (R, L)$
\end{center}
and
\begin{center}
$V_{a,t} = [P_{0} \oplus R \oplus R^{\vee_{L}}, \chi_{0} \perp H_{L} (R), {(i_t \oplus 1)}^{\vee_{L}} (\chi_{a} \perp H_{L} (R)) {(i_t \oplus 1)}] \in V (R, L)$
\end{center}
are in fact equal. Analogously to the proof of \cite[Theorem 4.1]{Sy1}, we will show this in the following three steps:
 
\begin{itemize}
\item We define an $R$-linear homomorphism $d_{L} : P_0 \oplus R \rightarrow R^{\vee_{L}}$. This homomorphism induces an automorphism $\varphi_L \in E (P_{0} \oplus R \oplus R^{\vee_{L}})$ defined by $\varphi_{L} = id_{P_0 \oplus R \oplus R^{\vee_{L}}} - d_{L}$;
\item We show that ${\varphi}_{L}^{\vee_{L}} {(i_s \oplus 1)}^{\vee_{L}} (\chi_{a} \perp H_{L} (R)) {(i_s \oplus 1)} {\varphi}_{L} = {(i_t \oplus 1)}^{\vee_{L}} (\chi_{a} \perp H_{L} (R)) {(i_t \oplus 1)}$;
\item Lemma \ref{L2.1} then implies that $V_{a, s} = V_{a, t}$.
\end{itemize}
 
For the first step, we first define an $R$-linear homomorphism $d'_{L}: P_0 \oplus R \rightarrow \det (P_0 \oplus R)$ by $p \mapsto s(1) \wedge t(1) \wedge p \in \det (P_0 \oplus R)$ and then let $d_{L}: P_0 \oplus R \rightarrow R^{\vee_{L}}$ be the $R$-linear homomorphism obtained from $d'_{L}$ by composing with the canonical isomorphism $\det (P_0 \oplus R) \cong \det (P_0) \cong R^{\vee_{L}}$. Finally, we let $\varphi_{L} = id_{P_0 \oplus R \oplus R^{\vee_{L}}} - d_{L}  \in E (P_{0} \oplus R \oplus R^{\vee_{L}})$ be the induced elementary automorphism.\\
For the second step, we can check the desired equality locally at every prime ideal of $R$. So let $\mathfrak{p}$ be a prime ideal of $R$ and let $\theta_{0}: R_{\mathfrak{p}} \xrightarrow{\cong} L_{\mathfrak{p}}$ be an $R_{\mathfrak{p}}$-linear isomorphism. Then, following Remark \ref{R1}, we can identify the localization at $\mathfrak{p}$ of the alternating isomorphism ${(i_s \oplus 1)}^{\vee_{L}} (\chi_{a} \perp H_{L} (R)) {(i_s \oplus 1)}$ with the non-degenerate alternating form ${(i_{s_{\mathfrak{p}}} \oplus 1)}^{t} (\chi_{a_{\mathfrak{p}}} \perp \psi_{2}) {(i_{s_{\mathfrak{p}}} \oplus 1)}$, where $a_{\mathfrak{p}}$ and $s_{\mathfrak{p}}$ are the localizations of $a$ and $s$ at $\mathfrak{p}$ respectively. Analogously, we can identify the localization at $\mathfrak{p}$ of the alternating isomorphism ${(i_t \oplus 1)}^{\vee_{L}} (\chi_{a} \perp H_{L} (R)) {(i_t \oplus 1)}$ with the non-degenerate alternating form ${(i_{t_{\mathfrak{p}}} \oplus 1)}^{t} (\chi_{a_{\mathfrak{p}}} \perp \psi_{2}) {(i_{t_{\mathfrak{p}}} \oplus 1)}$. Furthermore, the localization at $\mathfrak{p}$ of $\varphi_{L}$ can be identifed with the automorphism $\varphi$ (now defined for $a_{\mathfrak{p}}$, $s_{\mathfrak{p}}$ and $t_{\mathfrak{p}}$) considered in the first step of the proof of \cite[Theorem 4.1]{Sy1}. With all these canonical identifications, it follows immediately from the second step in the proof of \cite[Theorem 4.1]{Sy1} that the desired equality
\begin{center}
${\varphi}_{L}^{\vee_{L}} {(i_s \oplus 1)}^{\vee_{L}} (\chi_{a} \perp H_{L} (R)) {(i_s \oplus 1)} {\varphi}_{L} = {(i_t \oplus 1)}^{\vee_{L}} (\chi_{a} \perp H_{L} (R)) {(i_t \oplus 1)}$
\end{center}
holds after localization at every prime $\mathfrak{p}$ of $R$ and therefore also globally; in other words, the desired equality
\begin{center}
${\varphi}_{L}^{\vee_{L}} {(i_s \oplus 1)}^{\vee_{L}} (\chi_{a} \perp H_{L} (R)) {(i_s \oplus 1)} {\varphi}_{L} = {(i_t \oplus 1)}^{\vee_{L}} (\chi_{a} \perp H_{L} (R)) {(i_t \oplus 1)}$
\end{center}
holds.\\
For the third and final step, since $\varphi_L \in E (P_{0} \oplus R \oplus R^{\vee_{L}})$, the second statement in Lemma \ref{L2.1} immediately implies that indeed $V_{a,s} = V_{a,t} \in V (R, L)$, as desired.
\end{proof}

\begin{Rem}\label{R2}
It follows from Remark \ref{R1} that if there is an isomorphism $\theta_{0}: R \xrightarrow{\cong} \det (P_{0})$, then the generalized Vaserstein symbol associated to $P_{0}$ above coincides by means of the identification $V (R) \cong V (R, L)$ with the generalized Vaserstein symbol associated to $P_{0}$ together with the trivialization $\theta_{0}$ of its determinant defined in \cite[Section 4.B]{Sy1}.
\end{Rem}

\begin{Thm}\label{Maps}
Let $a \in Um (P_{0} \oplus R)$ and $\varphi \in Aut (P_0 \oplus R)$. Then we have $V (a) = V (a \varphi) \in V (R, L)$ if $\varphi \in E (P_0 \oplus R)$ and $V (a) = V (a \varphi) \in V_{SL} (R, L)$ if $\varphi \in SL (P_0 \oplus R)$. In particular, the generalized Vaserstein symbol induces well-defined maps $V: Um (P_0 \oplus R)/E (P_0 \oplus R) \rightarrow V (R, L)$ and, by abuse of notation, $V: Um (P_0 \oplus R)/SL (P_0 \oplus R) \rightarrow V_{SL} (R, L)$.
\end{Thm}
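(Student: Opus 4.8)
The plan is to reduce the whole statement to the two group-theoretic facts already available, namely Lemma \ref{L2.1} for the elementary case and the defining $SL$-relation of $V_{SL}(R,L)$ for the special linear case. The guiding observation is that precomposition with $\varphi$ turns the data defining $V(a\varphi)$ into a conjugate of the data defining $V(a)$. Concretely, writing $b = a\varphi$, one has $P(b) = \ker(a\varphi) = \varphi^{-1}(P(a))$, so that $\varphi$ restricts to an isomorphism $\bar\varphi := \varphi|_{P(b)} : P(b) \xrightarrow{\cong} P(a)$. By Theorem \ref{T3.2} the symbol is independent of the chosen section, so I am free to compute $V(b)$ using the section $s' := \varphi^{-1} s$ of $b$, where $s$ is the section of $a$ used to compute $V(a)$.

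The computational heart of the argument is to establish two compatibility relations. First I would verify the identity of isomorphisms $(\bar\varphi \oplus id_R) \circ i_{s'} = i_s \circ \varphi$, i.e. $i_{s'} = (\bar\varphi^{-1} \oplus id_R)\, i_s\, \varphi$; this is a direct unwinding of the definitions $r(p) = p - s\,a(p)$ and $i_s(p) = a(p) + r(p)$ together with $a\varphi\varphi^{-1} = a$. Second I would compare the alternating isomorphisms $\chi_b$ and $\chi_a$ through $\bar\varphi$. Tracking the induced determinant isomorphisms $\theta_a, \theta_b$ attached to $a$ and $b$, one obtains $\theta_b = \det(\bar\varphi)^{-1} \circ \theta_a \circ \det(\varphi)$ and hence
\begin{center}
$\chi_{b} = (\det\varphi)^{-1}\, \bar\varphi^{\vee_{L}} \chi_{a} \bar\varphi$,
\end{center}
the scalar $(\det\varphi)^{-1}$ arising precisely from the factor $\det(\varphi)$. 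In the two cases of interest $\varphi$ has determinant $1$ (elementary automorphisms are unipotent, so $E(P_0 \oplus R) \subseteq SL(P_0 \oplus R)$), the scalar disappears, and one is left with $\chi_b = \bar\varphi^{\vee_{L}} \chi_a \bar\varphi$.

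Setting $\Phi := \varphi \oplus 1 \in Aut(P_0 \oplus R \oplus R^{\vee_{L}})$ and $\bar\Phi := (\bar\varphi \oplus id_R) \oplus 1$, the two relations combine, in these cases, to give $i_{s'} \oplus 1 = \bar\Phi^{-1} (i_s \oplus 1) \Phi$ and $\chi_b \perp H_{L}(R) = \bar\Phi^{\vee_{L}} (\chi_a \perp H_{L}(R)) \bar\Phi$. Substituting these into the definition of $V(b)$ and cancelling the $\bar\Phi$'s, the second entry of $V(b)$ becomes $\Phi^{\vee_{L}} f_s \Phi$, where $f_s = {(i_s \oplus 1)}^{\vee_{L}} (\chi_a \perp H_{L}(R)) (i_s \oplus 1)$ is the second entry of $V(a)$; the first entries $\chi_0 \perp H_{L}(R)$ agree. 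Hence $V(a\varphi) = [P_0 \oplus R \oplus R^{\vee_{L}}, \chi_0 \perp H_{L}(R), \Phi^{\vee_{L}} f_s \Phi]$ while $V(a) = [P_0 \oplus R \oplus R^{\vee_{L}}, \chi_0 \perp H_{L}(R), f_s]$.

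It then remains only to conjugate $\Phi$ away. If $\varphi \in E(P_0 \oplus R)$, then $\Phi = \varphi \oplus 1$ is a product of elementary automorphisms with respect to the three-fold decomposition $P_0 \oplus R \oplus R^{\vee_{L}}$, so the second statement of Lemma \ref{L2.1} yields $V(a\varphi) = V(a)$ in $V(R,L)$. If $\varphi \in SL(P_0 \oplus R)$, then $\det \Phi = 1$, so $\Phi \in SL(P_0 \oplus R \oplus R^{\vee_{L}})$, and the defining $SL$-relation of $V_{SL}(R,L)$ yields $V(a\varphi) = V(a)$ in $V_{SL}(R,L)$. Since this holds for every $\varphi$ in the respective group, $V$ is constant on the $E$- (resp. $SL$-) orbits and therefore descends to the two asserted maps on the orbit spaces. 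I expect the main obstacle to be the careful bookkeeping of the determinant isomorphisms in the second compatibility relation, in particular isolating the scalar $(\det\varphi)^{-1}$ and confirming that it is trivial exactly in the two relevant cases; as in the proof of Theorem \ref{T3.2}, one may if necessary verify this relation after localizing at each prime, where $L$ trivializes and the forms become the honest non-degenerate alternating forms treated in \cite{Sy1}.
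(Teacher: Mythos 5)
Your proposal is correct and follows essentially the same route as the paper: choose the section $\varphi^{-1}s$ for $a\varphi$, establish the intertwining identity $(i_s \oplus 1)(\varphi \oplus 1) = (\overline{\varphi} \oplus 1 \oplus 1)(i_{\varphi^{-1}s} \oplus 1)$, reduce to $\chi_{a\varphi} = \overline{\varphi}^{\vee_{L}} \chi_a \overline{\varphi}$, and conclude via Lemma \ref{L2.1} in the elementary case and the defining relation of $V_{SL}(R,L)$ in the special linear case. Your explicit isolation of the scalar $(\det\varphi)^{-1}$ in the comparison of $\chi_{a\varphi}$ with $\overline{\varphi}^{\vee_{L}}\chi_a\overline{\varphi}$ is exactly the content the paper delegates to \cite[Lemma 2.11]{Sy1}, so no gap remains.
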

 
\begin{proof}
We let $\varphi$ be an automorphism of $P_0 \oplus R$ and $a \in Um (P_0 \oplus R)$ with a chosen section $s: R \rightarrow P_0 \oplus R$. Then ${\varphi}^{-1} s$ is automatically a section of $a \varphi \in Um (P_{0} \oplus R)$. Furthermore, we let $i_{s}: P_0 \oplus R \rightarrow P(a) \oplus R$ and $i_{{\varphi}^{-1}s}: P_0 \oplus R \rightarrow P(a \varphi) \oplus R$ be the isomorphisms induced by the sections $s$ and ${\varphi}^{-1} s$ respectively. It suffices to prove that

\begin{center}
${(\varphi \oplus 1)}^{\vee_{L}} {(i_{s} \oplus 1)}^{\vee_{L}} (\chi_a \perp H_{L} (R)) {(i_{s} \oplus 1)} {(\varphi \oplus 1)} =$\\
${(i_{{\varphi}^{-1}s} \oplus 1)}^{\vee_{L}} (\chi_{(a \varphi)} \perp H_{L} (R)) {(i_{{\varphi}^{-1}s} \oplus 1)}$
\end{center}

as the theorem then follows immediately from Lemma \ref{L2.1} in case $\varphi \in E (P_{0} \oplus R)$ and from the definition of $V_{SL} (R, L)$ in case $\varphi \in SL (P_{0} \oplus R)$.\\
So let us prove that the desired equality above holds indeed. After unwinding definitions, one realizes that
\begin{center}
$(i_{s} \oplus 1) (\varphi \oplus 1) = (\overline{\varphi} \oplus 1 \oplus 1) (i_{{\varphi}^{-1}s} \oplus 1)$,
\end{center}
where we denote by $\overline{\varphi}$ the isomorphism $P (a \varphi) \rightarrow P(a)$ induced by $\varphi$. As a direct consequence, it only remains to show that ${\overline{\varphi}}^{\vee_{L}} \chi_a \overline{\varphi} = \chi_{a \varphi}$.\\
For this pupose, let $p,q \in P (a \varphi)$; by definition, $\chi_{a \varphi}(q)$ sends $p$ to the image of $p \wedge q$ under the isomorphism $\det (P (a \varphi)) \cong \det (P_{0})$. This element can also be described as the image of $p \wedge q \wedge {\varphi}^{-1} s(1)$ under the canonical isomorphism $\det (P_0 \oplus R) \cong \det (P_{0})$. An analogous computation shows that ${\overline{\varphi}}^{\vee_{L}} \chi_a \overline{\varphi} (q)$ sends $p$ to the image of the element ${\varphi} (p) \wedge {\varphi} (q) \wedge s(1)$ under the canonical isomorphism $\det (P_0 \oplus R) \cong \det (P_{0})$. The theorem then follows from \cite[Lemma 2.11]{Sy1}.
\end{proof}

\begin{Kor}
The generalized Vaserstein symbol induces a well-defined map $V: Um (P_0 \oplus R)/SL (P_0 \oplus R) \rightarrow V_{GL} (R, L)$.
\end{Kor}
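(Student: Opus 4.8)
The plan is to obtain the desired map as the composite of the map already constructed in Theorem~\ref{Maps} with the canonical projection onto $V_{GL}(R,L)$. Recall that Theorem~\ref{Maps} produces a well-defined map $V: Um(P_0 \oplus R)/SL(P_0 \oplus R) \rightarrow V_{SL}(R,L)$, and that $V_{GL}(R,L)$ is by definition a quotient of $V_{SL}(R,L)$: both groups have the same generators and share the first two families of relations, while $V_{GL}(R,L)$ additionally imposes $[P,g,f] = [P,g,\varphi^{\vee_{L}} f \varphi]$ for every $\varphi \in Aut(P)$ rather than only for $\varphi \in SL(P)$. Consequently there is a canonical surjective homomorphism $\pi: V_{SL}(R,L) \rightarrow V_{GL}(R,L)$ carrying each class $[P,g,f]$ to its image.

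First I would note that $\pi$ is genuinely well-defined: every defining relation of $V_{SL}(R,L)$ is among the defining relations of $V_{GL}(R,L)$, so $\pi$ respects all relations and hence descends to the claimed group homomorphism. Then I would form the composite $\pi \circ V$, which is a map $Um(P_0 \oplus R)/SL(P_0 \oplus R) \rightarrow V_{GL}(R,L)$ and is well-defined as a composition of well-defined maps. In more explicit terms, for $a \in Um(P_0 \oplus R)$ and $\varphi \in SL(P_0 \oplus R)$, Theorem~\ref{Maps} already gives $V(a) = V(a\varphi)$ in $V_{SL}(R,L)$; applying $\pi$ preserves this equality in $V_{GL}(R,L)$, so the induced map on the orbit space under $SL(P_0 \oplus R)$ is well-defined.

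I do not expect any genuine obstacle here, since all the substantive verification was carried out in Theorem~\ref{Maps}, and the passage to $V_{GL}(R,L)$ only discards information by imposing further relations. The single point worth recording explicitly is the well-definedness of the projection $\pi$, which is immediate from the inclusion of relation sets; no further computation is required, and the Corollary follows formally.
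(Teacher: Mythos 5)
Your proposal is correct and is exactly the argument the paper intends: the paper's proof consists of the single line ``Follows immediately from Theorem \ref{Maps}'', which implicitly uses the canonical projection $V_{SL}(R,L) \rightarrow V_{GL}(R,L)$ you make explicit. Your write-up simply records the routine verification (inclusion of relation sets, hence well-definedness of the projection) that the paper leaves tacit.
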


\begin{proof}
Follows immediately from Theorem \ref{Maps}.
\end{proof}


\begin{thebibliography}{xxxxxx}
\bibitem[AF]{AF} A. Asok, J. Fasel, An explicit $KO$-degree map and applications, J. Topology 10 (2017), 268-300
\bibitem[F]{F} J. Fasel, Stably free modules over smooth affine threefolds, Duke Math. Journal 156 (2011), 33-49
\bibitem[FRS]{FRS} J. Fasel, R. A. Rao and R. G. Swan, On stably free modules over affine algebras, Publ. Math. Inst. Hautes \'Etudes Sci. 116 (2012), 223-243
\bibitem[NMK]{NMK} N. M. Kumar, Stably free modules, American Journal of Mathematics 107 (1985), no. 6, 1439-1444
\bibitem[SV]{SV} A. A. Suslin, L.N. Vaserstein, Serre's problem on projective modules over polynomial rings, and algebraic K-theory, Izv. Akad. Nauk. SSSR Ser. Mat. 40 (1976), 993-1054
\bibitem[Sy1]{Sy1} T. Syed, A generalized Vaserstein symbol, Annals of K-Theory 4 (2019), no. 4, 671-706
\bibitem[Sy2]{Sy2} T. Syed, The cancellation of projective modules of rank 2 with a trivial determinant, Algebra \& Number Theory 15 (2021), no. 1, 109-140
\bibitem[Sy3]{Sy3} T. Syed, Cancellation of vector bundles of rank 3 with trivial Chern classes on smooth affine fourfolds, Journal of Pure and Applied Algebra (2022), Volume 226, Issue 9, 107038
\end{thebibliography}
\end{document}